\DeclareRobustCommand{\pmod}{\@ifstar\@pmods\@@pmod}
\def\@pmods#1{\mkern4mu({\operator@font mod}\mkern 6mu#1)}
\theoremstyle{plain}
\newtheorem{theorem}{Theorem} 
\newtheorem{lemma}{Lemma}
\newtheorem{conjecture}{Conjecture}
\newtheorem{mainthm}{Theorem}
\numberwithin{equation}{section}
\theoremstyle{definition}
\theoremstyle{remark}
\title[The singular series of a cubic form in many variables]{The singular series of a cubic form in many variables and a new proof of Davenport's Shrinking Lemma}
\author[C. Bernert]{Christian Bernert}
\address{Mathematisches Institut, Bunsenstraße 3-5, 37073 Göttingen, Germany}
\email{christian.bernert@mathematik.uni-goettingen.de}
\begin{document}
\maketitle

\begin{abstract}
    We study the singular series associated to a cubic form with integer coefficients. If the number of variables is at least $10$, we prove the absolute convergence (and hence positivity) under the assumption of Davenport's Geometric Condition, improving on a result of Heath-Brown. For the case of $9$ variables, we give a conditional treatment. We also provide a new short and elementary proof of Davenport's Shrinking Lemma which has been a crucial tool in previous literature on this and related problems.
\end{abstract}

\section{Introduction}

Let $C(x_1,\dots,x_n) \in \mathbb{Z}[x_1,\dots,x_n]$ be a cubic form. We are interested in the existence of nontrivial integer solutions, i.e. nonzero vectors $\mathbf{x} \in \mathbb{Z}^n$ with $C(\mathbf{x})=0$.

Davenport \cite{davenport63} proved that if $n \ge 16$, such nontrivial solutions always exist. This remained the state of the art for almost half a century until Heath-Brown \cite{heath2007cubic} could extend the admissible range to $n \ge 14$, this has not been improved to date. Given that $10$ variables suffice to guarantee local solubility \cite{lewis_p_adic_zeroes}, it is generally expected that the same result should hold already when $n \ge 10$.

The Hardy-Littlewood Circle Method aims to prove the existence of solutions by proving that there are indeed many. If it works, it provides us with an asymptotic formula of the shape
\begin{equation}
    \label{asymp}
    \#\{\mathbf{x} \in \mathbb{Z}^n, x \in P\mathcal{B}\} =(1+o(1)) \cdot \mathfrak{I} \cdot \mathfrak{S} \cdot P^{n-3}
\end{equation}
as $P \to \infty$. Here $\mathcal{B} \subset \mathbb{R}^n$ is a suitably chosen box and $\mathfrak{I}$ and $\mathfrak{S}$ denote the usual \textit{singular integral} and the \textit{singular series} of the cubic form $C$, respectively, measuring the local solubility of $C$ over the fields $\mathbb{R}$ and $\mathbb{Q}_p$ for all primes $p$. The singular integral is rather unimportant for this paper, so we refer the reader to \cite{davenport63} for its precise definition and only mention that it is known to be positive for a suitable choice of $\mathcal{B}$ as soon as $n \ge 4$. The singular series is the key object of the present paper and will be defined and discussed in more detail in the next section. 

For now, let us continue discussing the heuristic asymptotic formula \eqref{asymp} and let us note that it clearly fails in certain degenerate situations. Indeed, when $C$ is reducible, it is easy to see that the count on the left-hand side is already $\gg P^{n-1}$. More generally, if our cubic form is of the shape $C(\mathbf{x})=x_1Q_1(\mathbf{x})+x_2Q_2(\mathbf{x})$ for certain quadratic forms $Q_1$ and $Q_2$, we still have $\gg P^{n-2}$ solutions and hence too many for $\eqref{asymp}$ to possibly hold.

The ingenious idea of Davenport to circumvent this fundamental problem was to establish a certain dichotomy: If the circle method fails to produce the asymptotic \eqref{asymp}, then this failure could be turned into an alternative proof of the existence of solutions, though not in such a precise quantitative manner.

To describe Davenport's idea in more detail, we write $C(\mathbf{x})=\sum_{i,j,k} c_{ijk} x_ix_jx_k$ where we assume the $c_{ijk}$ to be symmetric and integers (as we may by multiplying $C$ by $6$ if necessary). We then define the bilinear forms
\[
    B_i(\mathbf{x},\mathbf{y})=\sum_{j,k=1}^n c_{ijk} x_jy_k
\]
and the matrix $M(\mathbf{x})$ with entries
\[
    M(\mathbf{x})_{jk}=\sum_{i=1}^n c_{ijk} x_i
\]
so that $M(\mathbf{x})\mathbf{y}$ is the vector with entries $B_i(\mathbf{x},\mathbf{y})$. For later use we let $D(\mathbf{x})=\det M(\mathbf{x})$ and $r(\mathbf{x})=\text{rk} M(\mathbf{x})$. For a prime $p$, we will also need to consider the $\mathbb{F}_p$-rank of $M(\mathbf{x})$ which we denote by $r_p(\mathbf{x})$.

Let us now say that $C$ satisfies \textit{Davenport's Geometric Condition} if
\begin{equation}
\label{gc}
    \#\{\mathbf{x} \in \mathbb{Z}^n: \|x\|_{\infty} \le P, r(\mathbf{x})=r\} \ll P^{r+\varepsilon}
\end{equation}
is satisfied for all integers $r$ with $0 \le r \le n$.

We can then describe Davenport's result more concisely as follows:

\begin{mainthm} \label{A}If $C$ does not satisfy Davenport's Geometric Condition \eqref{gc}, then the equation $C(\mathbf{x})=0$ has a non-trivial integer solution.\end{mainthm}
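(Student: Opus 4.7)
My plan is to prove the contrapositive: if \eqref{gc} fails, then $C(\mathbf{x})=0$ has a nontrivial integer solution. The first step is to translate the failure of \eqref{gc} into a geometric dimension statement. For each $r$, the locus $V_r=\{\mathbf{x}\in\mathbb{A}^n : r(\mathbf{x})\le r\}$ is an affine $\mathbb{Q}$-variety cut out by the $(r+1)\times(r+1)$ minors of $M(\mathbf{x})$. A standard point-counting lemma for affine varieties gives $O_\varepsilon(P^{\dim V_r + \varepsilon})$ integer points of sup-norm at most $P$ on $V_r$, so the failure of \eqref{gc} forces some irreducible $\mathbb{Q}$-component $V\subseteq V_r$ to have dimension $\ge r+1$ (after a Galois-orbit argument to ensure rationality of the component).

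Next I would exploit this large component via an incidence construction. Consider
\[
 \tilde V=\{(\mathbf{x},\mathbf{y})\in V\times\mathbb{A}^n : M(\mathbf{x})\mathbf{y}=0\}.
\]
The fiber over $\mathbf{x}\in V$ is $\ker M(\mathbf{x})$ of dimension $\ge n-r$, hence $\dim\tilde V\ge n+1$. Since the projection to $\mathbf{y}\in\mathbb{A}^n$ lands in $n$-dimensional space, some fiber has dimension $\ge 1$: there is a nonzero vector $\mathbf{y}_0$, which I would take to be a primitive integer vector after clearing denominators, such that
\[
S:=V\cap\ker M(\mathbf{y}_0)=\{\mathbf{x}\in V : M(\mathbf{x})\mathbf{y}_0=0\}
\]
has dimension $\ge 1$. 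The equality of the two descriptions uses the symmetry $M(\mathbf{x})\mathbf{y}=M(\mathbf{y})\mathbf{x}$ coming from the full symmetry of the coefficients $c_{ijk}$.

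The payoff is a diagonalization of $C$ on the plane through $\mathbf{y}_0$. For any $\mathbf{x}\in S$, the symmetry of the matrices $M(\mathbf{x})$ and $M(\mathbf{y}_0)$ gives $B(\mathbf{x},\mathbf{y}_0,\mathbf{y}_0)=\mathbf{y}_0^T M(\mathbf{y}_0)\mathbf{x}=0$ and $B(\mathbf{x},\mathbf{x},\mathbf{y}_0)=\mathbf{y}_0^T M(\mathbf{x})\mathbf{x}=0$, so
\[
 C(\lambda\mathbf{x}+\mu\mathbf{y}_0)=\lambda^3 C(\mathbf{x})+\mu^3 C(\mathbf{y}_0).
\]
If $C(\mathbf{y}_0)=0$ we are done. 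Otherwise, I would seek an $\mathbf{x}^*\in S(\mathbb{Q})$ with $C(\mathbf{x}^*)=0$ and clear denominators to produce the required integer zero.

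The main obstacle is the two rationality steps: choosing $\mathbf{y}_0$ in $\mathbb{Q}^n$ from the incidence projection, and producing a $\mathbb{Q}$-rational zero of $C$ on the positive-dimensional variety $S$. A positive-dimensional $\mathbb{Q}$-variety need not carry any $\mathbb{Q}$-point at all, so neither step is automatic. My expectation is to resolve both by induction on $n$: either $C$ vanishes identically on some $\mathbb{Q}$-component of $S$, yielding solutions for free, or the rank-descent machinery can be re-applied inside the smaller linear space $\ker M(\mathbf{y}_0)$, iterating until one reaches a base case in few variables where the conclusion is elementary.
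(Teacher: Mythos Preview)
The paper does not give its own proof of Theorem~A; it is quoted as Davenport's result with a reference to \cite{davenport63}. So there is nothing in the paper to compare against directly, and one has to measure your sketch against Davenport's original argument.

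Your outline is in the right spirit (incidence variety, fibre dimension, diagonalisation via the symmetry $M(\mathbf{x})\mathbf{y}=M(\mathbf{y})\mathbf{x}$), and you correctly flag the two rationality obstacles. But there is a genuine gap, and it is structural rather than cosmetic. By passing from ``\eqref{gc} fails'' to ``$\dim V_r\ge r+1$'' you have thrown away the one piece of arithmetic information you possess: the failure of \eqref{gc} literally hands you $\gg P^{r+\delta}$ \emph{integer} points $\mathbf{x}$ with $r(\mathbf{x})=r$. A bare dimension statement over $\overline{\mathbb{Q}}$ gives you no rational points on $V_r$, on the image of your projection, or on $S$, and your proposed endpoint ``find $\mathbf{x}^*\in S(\mathbb{Q})$ with $C(\mathbf{x}^*)=0$'' is not a step one can justify from what you have set up; nor does the diagonalised equation $\lambda^3C(\mathbf{x})+\mu^3C(\mathbf{y}_0)=0$ help, since $-C(\mathbf{y}_0)/C(\mathbf{x})$ need not be a rational cube.

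Davenport's resolution sidesteps algebraic geometry over $\overline{\mathbb{Q}}$ altogether. Working directly with the many integer $\mathbf{x}$ of bounded height and rank $r$, one produces for each such $\mathbf{x}$ a small nonzero integer vector $\mathbf{y}\in\ker M(\mathbf{x})$ (Siegel's lemma / minors bound), and then a pigeonhole on the $\mathbf{y}$'s yields a single integer $\mathbf{y}_0$ shared by many $\mathbf{x}$'s. This gives, inside the \emph{rational} subspace $\ker M(\mathbf{y}_0)$, again too many integer points of low rank, and one iterates. The descent terminates in a rational linear subspace on which $C$ vanishes identically (equivalently, one shows the $h$-invariant satisfies $h(C)<n$, so $C=\sum_{i\le h}L_iQ_i$ and $C$ vanishes on $\{L_1=\dots=L_h=0\}$). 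Your inductive idea ``re-apply the machinery inside $\ker M(\mathbf{y}_0)$'' is exactly this descent, but to make it work you must carry the integer-point count through the induction rather than a $\overline{\mathbb{Q}}$-dimension bound.
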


\begin{mainthm} If $C$ satisfies Davenport's Geometric Condition \eqref{gc}, then the asymptotic formula $\eqref{asymp}$ holds with $\mathfrak{I}, \mathfrak{S}>0$ as soon as $n \ge 16$. In particular, there are non-trivial integer solutions  to $C(\mathbf{x})=0$.\end{mainthm}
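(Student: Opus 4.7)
The plan is a classical Hardy--Littlewood circle method argument, with \eqref{gc} as the crucial arithmetic input. Define
\[
S(\alpha) = \sum_{\mathbf{x} \in P\mathcal{B} \cap \mathbb{Z}^n} e(\alpha C(\mathbf{x})),
\]
so that the count on the left-hand side of \eqref{asymp} equals $\int_0^1 S(\alpha)\, d\alpha$. I would perform a Farey-type dissection of $[0,1]$ into \emph{major arcs} $\mathfrak{M}$, consisting of small neighbourhoods of rationals $a/q$ with $q \le P^{\delta}$ for some small $\delta > 0$, and the complementary \emph{minor arcs} $\mathfrak{m}$. The task then splits into bounding $|S(\alpha)|$ on $\mathfrak{m}$ and evaluating the main contribution coming from $\mathfrak{M}$.

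For $\alpha \in \mathfrak{m}$, apply Weyl differencing twice. The resulting expression bounds $|S(\alpha)|^4$ by a sum over pairs of shifts $(\mathbf{y},\mathbf{z})$ of linear exponential sums with phase proportional to $\alpha \mathbf{y}^{T} M(\mathbf{z}) \mathbf{x}$. An application of Davenport's shrinking lemma then forces the pairs that contribute substantially to satisfy that all coordinates of $\alpha M(\mathbf{z})\mathbf{y}$ lie close to integers; since $\alpha \in \mathfrak{m}$ admits no good rational approximation, this can be rearranged into a rank restriction on $M(\mathbf{z})$. Summing over the rank and invoking the geometric condition \eqref{gc} to bound the number of admissible $\mathbf{z}$ of each rank, one reorganises the estimate to obtain a Weyl-type bound $|S(\alpha)| \ll P^{n-\sigma+\varepsilon}$ with $\sigma$ large enough that $\int_{\mathfrak{m}} |S(\alpha)| \, d\alpha = o(P^{n-3})$ as soon as $n \ge 16$.

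On $\mathfrak{M}$ the standard local approximation $S(a/q+\beta) \approx q^{-n}S(a,q)\,I(\beta)$, with complete sum $S(a,q)=\sum_{\mathbf{x}\bmod q} e((a/q) C(\mathbf{x}))$ and $I(\beta)=\int_{P\mathcal{B}} e(\beta C(\mathbf{t}))\, d\mathbf{t}$, integrates to produce a main term of the shape $\mathfrak{S}(P^{\delta}) \cdot \mathfrak{I}(P^{3\delta}) \cdot P^{n-3}$ involving truncated versions of the singular series and singular integral, plus an error which the minor arc bound absorbs. To pass from truncations to completions one needs absolute convergence of $\mathfrak{S}=\sum_{q\ge 1} q^{-n}\sum_{(a,q)=1} S(a,q)$, which for $n\ge 16$ follows from the classical bound $|S(a,q)|\ll q^{n-n/3+\varepsilon}$; the completion of $\mathfrak{I}$ is handled analogously. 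Positivity of $\mathfrak{S}$ is then extracted from its Euler product $\prod_p \sigma_p$ together with the local solubility result of \cite{lewis_p_adic_zeroes}, which guarantees a nonsingular $\mathbb{Q}_p$-zero at every prime once $n \ge 10$, while $\mathfrak{I} > 0$ is arranged by choosing $\mathcal{B}$ around a nonsingular real zero of $C$.

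The main obstacle is the minor arc estimate: one must carefully combine Weyl differencing with the shrinking lemma to convert the absence of a rational approximation into a rank constraint on $M(\mathbf{z})$, and only then does \eqref{gc} supply the saving. The threshold $n \ge 16$ emerges precisely from balancing the exponent gained this way against the length of the major arcs; everything remaining (major arc expansion, completion, positivity) is routine once this bound is in hand.
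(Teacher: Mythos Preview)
The paper does not actually prove this statement: it is quoted as Davenport's theorem from \cite{davenport63}, and the paper's own contributions concern only the singular series for $n\ge 10$ (Theorem~\ref{thm1}) and the Shrinking Lemma (Lemma~\ref{shrink}). So there is no proof in the paper to compare against; what you have written is a faithful outline of Davenport's original circle method argument, and in that sense it is correct.

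One small inaccuracy: the bound you quote for the complete sums, $|S(a,q)|\ll q^{n-n/3+\varepsilon}=q^{2n/3+\varepsilon}$, is too strong. The argument in the paper (Lemma~\ref{lemma2} together with \eqref{gc}) yields only Heath-Brown's pointwise bound $S(q,a)\ll q^{5n/6+\varepsilon}$, and Davenport's original treatment gave something weaker still. This does not affect the conclusion for $n\ge 16$, since even $q^{7n/8+\varepsilon}$ would suffice for absolute convergence of $\mathfrak{S}$ in that range, but you should not claim the sharper exponent without justification.
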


Note that Theorem \ref{A} does not make any assumption on the number of variables $n$. This means that in trying to improve on the constraint on the number of variables, we are free to assume that the Geometric Condition is satisfied.

Indeed, this is what Heath-Brown did, showing

\begin{mainthm} If $C$ satisfies Davenport's Geometric Condition \eqref{gc}, then the asymptotic formula $\eqref{asymp}$ holds with $\mathfrak{I}, \mathfrak{S}>0$ as soon as $n \ge 14$. In particular, there are non-trivial integer solutions to $C(\mathbf{x})=0$.\end{mainthm}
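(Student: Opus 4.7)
The plan is to apply the Hardy-Littlewood circle method, establishing \eqref{asymp} through a standard major/minor arc decomposition of $\int_0^1 S(\alpha)\,d\alpha$, where $S(\alpha) = \sum_{\mathbf{x} \in P\mathcal{B}\cap \mathbb{Z}^n} e(\alpha C(\mathbf{x}))$. The objective is to show that the minor arcs contribute $o(P^{n-3})$, while the major arcs produce the main term with a convergent, positive singular series.

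For the minor arcs, I would bound $|S(\alpha)|$ via two applications of Weyl differencing, reducing to a sum roughly of the shape $\sum_{\mathbf{y},\mathbf{z}}\prod_{i=1}^n \min(P,\|\alpha B_i(\mathbf{y},\mathbf{z})\|^{-1})$ over vectors $\mathbf{y},\mathbf{z}$ in suitable short ranges. Davenport's Shrinking Lemma would then transform this product into a count of pairs $(\mathbf{y},\mathbf{z})$ in smaller boxes satisfying $\|M(\mathbf{y})\mathbf{z}\|_\infty$ small. The crucial point is to stratify the $\mathbf{y}$-sum by the rank $r(\mathbf{y})$: by \eqref{gc} the number of $\mathbf{y}$ with $r(\mathbf{y}) = r$ is $\ll P^{r+\varepsilon}$, while for each such $\mathbf{y}$ the admissible $\mathbf{z}$ lie in a lattice of dimension at most $n-r$, contributing a factor saving proportional to $P^{n-r}$. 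Summed over $r$, this yields a minor-arc bound whose strength scales with $n$ and becomes sufficient to close the integral once $n\ge 14$.

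The major arc analysis proceeds along Davenport's lines: on arcs near $a/q$ one factorises $S(\alpha) \approx q^{-n} S(a,q) I(\beta)$, where $S(a,q)$ is the complete cubic exponential sum mod $q$ and $I(\beta)$ an archimedean integral. Integration produces the claimed $\mathfrak{I}\mathfrak{S}P^{n-3}$. It then remains to verify that $\mathfrak{S} = \prod_p \sigma_p$ converges absolutely and is positive. Absolute convergence would follow from pointwise bounds of the form $|q^{-n}S(a,q)| \ll q^{-1-\delta}$ for some $\delta>0$, which can be derived from a Hensel-style lifting argument controlling the $p$-adic ranks $r_p(\mathbf{x})$ and ultimately relying again on \eqref{gc}. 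Positivity of each local factor $\sigma_p$ is equivalent to the existence of a nonsingular $p$-adic zero, which is guaranteed for every $p$ as soon as $n \ge 10$ by Lewis's $p$-adic result.

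The principal obstacle is the minor arc bound, specifically extracting enough savings from the Weyl-differenced sum to descend from Davenport's threshold of $16$ to $14$. Applying \eqref{gc} naively at every rank $r$ with the crude count $P^{r+\varepsilon}$ produces Davenport's original bound. To gain the extra two variables one must refine the treatment: for small $r$ the true set of contributing $\mathbf{y}$ is constrained to proper subvarieties where better upper bounds are available, and for $r$ near $n$ the auxiliary $\mathbf{z}$-count tightens considerably because $M(\mathbf{y})$ is close to invertible. A careful interpolation of these two regimes, combined with a sharper handling of the transition between minor arcs and the boundary of the major arc region (so that marginal contributions are neither lost nor double-counted), is the real technical heart of the argument and what ultimately permits the improvement to $n \ge 14$.
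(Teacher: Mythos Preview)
The paper does not supply its own proof of this theorem; it is quoted as Heath-Brown's result from \cite{heath2007cubic} and serves as background. There is therefore no in-paper argument to compare against, but your sketch still contains a genuine gap worth flagging.

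The method you describe in the minor-arc paragraph---two applications of Weyl differencing, followed by the Shrinking Lemma and a rank stratification of the $\mathbf{y}$-sum via \eqref{gc}---is exactly Davenport's original argument, and it yields $n\ge 16$, not $n\ge 14$. Your final paragraph concedes that something further is required, but the proposed fix (``a careful interpolation'' between small-rank subvariety bounds and large-rank $\mathbf{z}$-counts) is not a concrete mechanism, and in fact no refinement of that bilinear count alone recovers the missing two variables. Heath-Brown's actual innovation is structural: he replaces Davenport's double Weyl step by a \emph{single} van der Corput differencing (one Cauchy--Schwarz rather than two), so that $|S(\alpha)|^2$ rather than $|S(\alpha)|^4$ is controlled by a sum over difference vectors $\mathbf{h}$. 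The resulting inner sums have quadratic phase and are handled by Poisson summation; the upshot is the pointwise bound $S(q,a)\ll q^{5n/6+\varepsilon}$ and its Weyl-sum analogue. On top of this, an averaging over the approximating denominators $q$---exploiting that $r_p(\mathbf{h})<r(\mathbf{h})$ forces $p$ to divide a fixed nonzero minor of $M(\mathbf{h})$, hence occurs for only $O(H^{\varepsilon})$ primes per $\mathbf{h}$---is what pushes the threshold down. This averaging device is precisely what the present paper rehearses for the Gau\ss\ sums in Section~3 (see \eqref{fall1}--\eqref{fall2}); your outline omits both the single-differencing structure and this averaging step, which together constitute the real content of the descent from $16$ to $14$.
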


In view of the above discussion, it is natural to conjecture that this should extend to $n \ge 10$:

\begin{conjecture}
\label{mainconj}
If $C$ satisfies Davenport's Geometric Condition \eqref{gc}, then the asymptotic formula $\eqref{asymp}$ holds with $\mathfrak{I}, \mathfrak{S}>0$ as soon as $n \ge 10$. In particular, there are non-trivial integer solutions to $C(\mathbf{x})=0$.\end{conjecture}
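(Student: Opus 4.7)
The plan is to carry out the Hardy-Littlewood circle method for the exponential sum
\[
    S(\alpha) = \sum_{\mathbf{x} \in P\mathcal{B} \cap \mathbb{Z}^n} e(\alpha C(\mathbf{x}))
\]
and to extract \eqref{asymp} from $\int_0^1 S(\alpha)\,d\alpha$. I would first perform the standard dissection of $[0,1]$ into major arcs centred at rationals $a/q$ of small denominator and minor arcs being the complement, calibrated so that the major-arc contribution produces the expected main term of order $P^{n-3}$. On the major arcs the classical analysis isolates the asymptotic $\mathfrak{I}\mathfrak{S}\, P^{n-3}$, so that the proof of a positive main term reduces to (a) absolute convergence of $\mathfrak{S}$ together with positivity of each local density, and (b) positivity of $\mathfrak{I}$. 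Item (b) is known for a suitable choice of $\mathcal{B}$ whenever $n\ge 4$, and positivity of the local densities at every prime $p$ follows from $p$-adic solvability of $C(\mathbf{x})=0$, which is in turn guaranteed for $n\ge 10$ by the theorem of Lewis \cite{lewis_p_adic_zeroes}. The absolute convergence in (a) is exactly what this paper establishes under Davenport's Geometric Condition.

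The serious task is the treatment of the minor arcs, and this is where I expect the proof to have to deviate from established technology. The classical route is to apply Weyl differencing twice, converting cubic exponential sums to linear sums involving the bilinear forms $B_i(\mathbf{x},\mathbf{y})$ attached to $C$. After this reduction, a pointwise Weyl-type inequality reads schematically
\[
    |S(\alpha)|^4 \ll P^{3n+\varepsilon}\, N(\alpha,P),
\]
where $N(\alpha,P)$ counts pairs $(\mathbf{x},\mathbf{y})$ with $\|\mathbf{x}\|_\infty,\|\mathbf{y}\|_\infty \le P$ for which $M(\mathbf{x})\mathbf{y}$ is close to an integer vector, in a sense depending on the distance of $\alpha$ from the nearest rational with controlled denominator. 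Using Davenport's Shrinking Lemma (re-proven in the present paper) one converts a saving in $N(\alpha,P)$ into an estimate for the number of $\mathbf{x}$ of bounded height with $r(\mathbf{x})\le r$ for appropriate $r$, and the Geometric Condition \eqref{gc} then supplies the required arithmetic input to upgrade this into a minor-arc bound of the shape $|S(\alpha)|\ll P^{n-3-\delta}$.

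Executing this scheme with only the pointwise Weyl bound is precisely what ties Davenport to $n\ge 16$ and Heath-Brown to $n\ge 14$, and it is also where the main obstacle lies. To reach $n\ge 10$ one almost certainly has to replace the pointwise estimate by a mean-value bound for $\int|S(\alpha)|^{2k}\,d\alpha$ over the minor arcs, harvesting additional cancellation either via a further application of van der Corput/Poisson summation in one of the variable blocks after differencing, or via a Kloosterman-refined circle method that averages nontrivially over the denominator $q$. A more speculative ingredient, but one suggested by the structure of the present paper, would be to use the finer stratification by $r_p(\mathbf{x})$ simultaneously at many primes $p$ rather than only the integral rank $r(\mathbf{x})$, since this $p$-adic information is exactly what controls the singular-series analysis. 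The hardest step will be to show that this additional input, combined with the Geometric Condition, produces a quantitative minor-arc bound sharp enough to close the remaining gap between $n=14$ and $n=10$; the absence of such a bound in the current literature is what makes Conjecture \ref{mainconj} genuinely open rather than merely a technical consequence of the singular-series result proved here.
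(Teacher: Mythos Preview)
The statement you are asked to prove is labelled \emph{Conjecture} in the paper, and the paper does not claim to prove it; no proof is given there. What the paper establishes is only one ingredient, namely the absolute convergence (and hence positivity) of $\mathfrak{S}$ for $n\ge 10$ under the Geometric Condition (Theorem~\ref{thm1}). The asymptotic formula \eqref{asymp} for $n\ge 10$ remains open.

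Your proposal is accordingly not a proof but a research outline, and you say as much in your final sentence. The genuine gap is precisely where you locate it: you give no minor-arc estimate of strength $|S(\alpha)|\ll P^{n-3-\delta}$ (or any adequate mean-value substitute) valid for $n\ge 10$. The ingredients you list---Weyl differencing, the Shrinking Lemma, the Geometric Condition---are exactly those that yield $n\ge 16$ (Davenport) and, with van der Corput refinement and averaging over $q$, $n\ge 14$ (Heath-Brown); none of the further ideas you float (higher mean values, Kloosterman refinement, simultaneous $r_p$-stratification) is carried out, and there is at present no known way to push them to $n=10$. So your assessment of the situation is accurate, but as a proof the proposal is incomplete at the decisive step, just as the paper itself leaves the conjecture open.
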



\section{Main results}

We now describe our main results. To this end, we need to return to the singular series $\mathfrak{S}$. It is defined in terms of the \textit{Gauß sums}
\[
    S(q,a)=\sum_{\mathbf{x} \pmod{q}} e\left(\frac{aC(\mathbf{x})}{q}\right)
\]
via
\[
    \mathfrak{S}=\sum_{q=1}^{\infty} \sum_{(a;q)=1} \frac{S(q,a)}{q^n}.
\]
By standard multiplicativity properties of the Gauß sums, this can (at least formally) also be written as an Euler product
\[
\mathfrak{S}=\prod_p \chi_p
\]
over all primes $p$ where
\[
    \chi_p=\sum_{k=0}^{\infty} \sum_{(a;p^k)=1} \frac{S(p^k,a)}{p^{kn}}
\]
is known as the $p$-adic density. By classical arguments it follows that $\chi_p>0$ if and only if $C(\mathbf{x})=0$ has a non-trivial solution over $\mathbb{Q}_p$. In particular, from \cite{lewis_p_adic_zeroes} we conclude that $\chi_p>0$ for all $p$ whenever $n \ge 10$.

So far we have ignored all convergence issues. The rearrangement between the series and the product representation of $\mathfrak{S}$ is only valid when either of the two is known to be absolutely convergent. Proving absolute convergence of $\mathfrak{S}$ is therefore crucial for switching between the two representations and also to conclude its positivity from the positivity of all individual factors $\chi_p$. Only then, the formula \eqref{asymp} truly captures the expected Local-Global Principle.

Previously, the absolute convergence for $\mathfrak{S}$ under the assumption of Davenport's Geometric Condition \eqref{gc} was known for $n \ge 11$ by work of Heath-Brown \cite{heath2007cubic}.

We begin by giving a new short and self-contained proof of this result. This new method then allows us to improve on previous work and establish the following.

\begin{theorem}
\label{thm1}
Assume that $n \ge 10$ and that $C$ satisfies Davenport's geometric condition. Then the singular series $\mathfrak{S}$ is absolutely convergent. In particular, $\mathfrak{S}>0$.
\end{theorem}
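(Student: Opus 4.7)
The plan is to bound the auxiliary sum $\mathcal{A}(q) := \sum_{(a;q)=1} |S(q,a)|$ and establish convergence of $\sum_{q \ge 1} q^{-n}\mathcal{A}(q)$; the positivity of $\mathfrak{S}$ and the identification with $\prod_p \chi_p$ then follow at once, using that $\chi_p > 0$ for every prime $p$, which is available in the range $n \ge 10$ by the $p$-adic solubility result cited. Since $\mathcal{A}$ is multiplicative via the Chinese Remainder Theorem applied to the Gauß sums, it is enough to prove a per-prime bound strong enough to ensure absolute convergence of the Euler product, i.e.\ that $\sum_{k \ge 1} p^{-kn}\mathcal{A}(p^k) = O(p^{-1-\delta})$ outside a finite bad set of primes.

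For the prime-power analysis I would apply Weyl differencing twice to $S(p^k,a)$. After two differencing steps, the cubic phase becomes linear in the innermost variable, and Cauchy--Schwarz together with completion produces a bound of the shape
\[
    |S(p^k,a)|^{4} \;\ll\; p^{3kn} \cdot N(p^k),
\]
where $N(p^k)$ counts residues $\mathbf{h} \pmod{p^k}$ for which the matrix $M(\mathbf{h})$ has many mod-$p^k$ annihilators. This count is governed by the elementary divisor structure of $M(\mathbf{h})$, hence ultimately by the $\mathbb{F}_p$-rank $r_p(\mathbf{h})$ together with its higher $p$-power analogues.

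The key step is then to stratify $N(p^k)$ by rank. Taking representatives for $\mathbf{h}$ in the box $[0,p^k)^n$, one must count integer vectors of height $\le p^k$ whose matrix $M(\mathbf{h})$ becomes degenerate modulo $p^k$. I would attack this by combining Davenport's Shrinking Lemma (in the sharp form reproved in this paper) with the Geometric Condition \eqref{gc}: the Shrinking Lemma lifts a mod-$p^k$ rank condition to a genuine integer rank condition in an enlarged box, after which \eqref{gc} bounds the relevant integer count by $p^{k(r+\varepsilon)}$ for the stratum $r_p(\mathbf{h}) = r$. Inserting this into the Weyl estimate, summing over $r$, and extracting the factor $p^{-kn}$ from $\mathcal{A}(p^k)/p^{kn}$ gives the required per-prime bound.

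The main obstacle is the case $n = 10$, where the Weyl saving in $|S(p^k,a)|$ and the cost of the low-rank count for $\mathbf{h}$ balance essentially exactly at the critical rank; no slack is available. Pushing through this borderline case forces both the Shrinking Lemma to be used in its optimal form and the transfer from mod-$p^k$ ranks to integer ranks to be carried out without loss. This is presumably why the author provides a fresh, tighter proof of the Shrinking Lemma and features it in the title: the extra efficiency there is precisely what drops the threshold from the previously-known $n \ge 11$ to $n \ge 10$.
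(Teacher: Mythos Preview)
Your proposal contains a genuine misidentification of the mechanism that pushes the threshold down to $n=10$. The new proof of the Shrinking Lemma in this paper is not a \emph{stronger} statement; it is exactly Davenport's original lemma, only with a shorter proof. Moreover, the paper explicitly does \emph{not} use the Shrinking Lemma in the proof of Theorem~\ref{thm1}: for Gau\ss{} sums it is circumvented entirely, with Lemma~\ref{lemma1} (kernel size divides a nonzero minor) and a Pigeonhole step serving as substitutes. So the sentence ``the extra efficiency there is precisely what drops the threshold from the previously-known $n\ge 11$ to $n\ge 10$'' is simply wrong, and the strategy you outline---Weyl differencing, stratify by rank, apply the Shrinking Lemma plus the Geometric Condition---is essentially Heath-Brown's argument and stalls at $n\ge 11$.

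What actually happens for $n=10$ is quite different. After the van~der~Corput/Cauchy--Schwarz step and a dyadic average over primes $p\sim R$, the contributions from all rank strata are acceptable with the choice $H=R^{2/5+\delta}$ \emph{except} the single critical case $r(\mathbf{h})=10$, $r_p(\mathbf{h})=6$. The paper disposes of this stratum by a bespoke counting argument: Pigeonhole in the mod-$p$ kernel produces a small nonzero $\mathbf{y}$ with $B_i(\mathbf{y},\mathbf{h})=pm_i$ and $m_i\ll R^\delta$; Cramer's rule expresses $\mathbf{y}$ through $9\times 9$ minors $E_{jk}(\mathbf{h})$, giving a divisibility $D(\mathbf{h})\mid p\cdot E(\mathbf{h})$; and then a B\'ezout/resultant elimination together with a short bootstrapping on the $p$-adic valuation of $D(\mathbf{h})$ shows there are at most $H^{9+\varepsilon}$ pairs $(\mathbf{h},p)$ for each of the $R^{O(\delta)}$ choices of $(m_i)$. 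None of this is captured by ``use the Shrinking Lemma in its optimal form''; that is the missing idea in your plan.
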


This can be seen as giving further evidence to Conjecture \ref{mainconj}. Moreover, the Gauß sums featuring in the definition of the singular series are closely related to the Weyl sums that would appear in a circle method proof of $\eqref{asymp}$. It is therefore to be hoped that the study of the Gauß sums and hence of the singular series can serve as a good model problem for our understanding of the more difficult Circle Method Problem.

We can also say something about the case $n=9$. We begin by proving that the only possible obstructions to absolute convergence are the Gauß sums with prime moduli. To deal with them, we then propose the following conjecture:

\begin{conjecture}
\label{conj}
Assume that $C$ satisfied the Geometric Condition \eqref{gc}. Then for all $n$ and uniformly in $1 \le H \le R$, we have
\[
    \#\{\mathbf{h} \le H, R<p \le 2R: r_p(\mathbf{h}) \le r\} \ll H^r \cdot R^{1+\varepsilon}.
\]
\end{conjecture}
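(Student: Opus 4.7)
The plan is to split the count according to the $\mathbb{Z}$-rank $s := r(\mathbf{h})$ of $M(\mathbf{h})$, using that $r_p(\mathbf{h}) \le r(\mathbf{h})$ always. When $s \le r$, Davenport's Geometric Condition \eqref{gc} immediately gives $\ll H^{r+\varepsilon}$ such vectors, and pairing each with any of the $\ll R/\log R$ primes in $(R, 2R]$ contributes $\ll H^r R^{1+\varepsilon}$ (using $H \le R$ to absorb $H^\varepsilon$ into $R^\varepsilon$), which is within the target.

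The genuinely hard case is $s > r$, when the rank of $M(\mathbf{h})$ drops strictly upon reduction modulo $p$. Equivalently, $p$ must divide the gcd $\Delta(\mathbf{h})$ of all $(r+1) \times (r+1)$ minors of $M(\mathbf{h})$, which is a nonzero integer precisely because $s > r$. Since each such minor is a polynomial of degree $r+1$ in $\mathbf{h}$, we have $|\Delta(\mathbf{h})| \ll H^{r+1} \le R^{r+1}$, so at most $O(1)$ primes $p \in (R, 2R]$ can divide $\Delta(\mathbf{h})$ for any given $\mathbf{h}$. The difficulty is that \eqref{gc} is vacuous for $s = n$, and applying it rank-by-rank yields only $\sum_{s>r} H^{s+\varepsilon} \ll H^{n+\varepsilon}$ admissible vectors, which is far too weak for the desired bound.

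To circumvent this, the plan is to swap the order of summation and count prime-by-prime. Concretely, for each fixed $p \in (R,2R]$ I would aim to establish the per-prime estimate
\[
\#\{\mathbf{h} \in [-H,H]^n : r_p(\mathbf{h}) \le r\} \ll H^{r+\varepsilon} + \frac{H^n}{R^{n-r}};
\]
summing over primes in the dyadic interval then gives $\ll H^{r} R^{1+\varepsilon} + H^n/R^{n-r-1}$, and the second term is at most $R \cdot H^r$ since $H \le R$. Such a per-prime bound is heuristically very plausible: for all but finitely many primes, the reduction $\overline{V_r} \subset \mathbb{A}^n_{\mathbb{F}_p}$ of the rank-$\le r$ locus should inherit dimension $r$ from the Geometric Condition over $\mathbb{Q}$, so Lang--Weil would give $|\overline{V_r}(\mathbb{F}_p)| \ll p^r$; the fraction of these $\mathbb{F}_p$-points admitting a lift in the box $[-H,H]^n$ (with $H < p$) should morally be of order $(H/p)^n$, reproducing the $H^n/R^{n-r}$ term.

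The main obstacle lies precisely in making this per-prime estimate rigorous. One must bound integer points in a short box whose reductions lie on a variety defined only mod $p$ --- genuinely subtler than the integer-point count on the variety itself --- and the bound must be uniform in $p$ with the correct dependence on $H$. Controlling the exceptional primes where $\dim \overline{V_r}$ might jump, and transferring the geometric hypothesis across reduction in a way that survives the $H \le R < p$ regime, is the crux, and presumably the reason the author prefers to leave this statement as a conjecture rather than a theorem.
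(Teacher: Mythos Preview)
The statement you are attempting is labelled \textbf{Conjecture} in the paper and is \emph{not} proved there. The only cases the paper settles are $r\in\{0,\,n-1,\,n\}$, in the remarks following the proof of Theorem~\ref{thm2}. Your split according to the integer rank $s=r(\mathbf{h})$ and your divisibility observation for $s>r$ (that $p$ divides a nonzero $(r+1)\times(r+1)$ minor, so only $O_n(1)$ primes $p\sim R$ can occur for each fixed $\mathbf{h}$) is exactly the mechanism the paper uses for $r=n-1$. Indeed, your own counting shows that the resulting bound $\sum_{s>r}H^{s+\varepsilon}\ll H^{n+\varepsilon}$ is $\le H^{r}R^{1+\varepsilon}$ precisely when $r\ge n-1$, so you have recovered what the paper proves and correctly located where the argument stops.

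The gap in the per-prime plan is more structural than you indicate. Even granting that $\dim_{\mathbb{F}_p}\overline{V_r}=r$ for all large $p$, Lang--Weil only controls $\lvert\overline{V_r}(\mathbb{F}_p)\rvert$; it says nothing about how many integer points in a box of side $H<p$ reduce into $\overline{V_r}$. Your ``fraction $(H/p)^n$'' heuristic tacitly treats the box and the variety as independent random subsets of $\mathbb{F}_p^n$, which is unjustified. To obtain $\ll H^{r+\varepsilon}$ one would effectively need the relevant $\mathbf{h}$ to lie on a $\mathbb{Z}$-variety of dimension $r$; but the problematic $\mathbf{h}$ are precisely those with $r_p(\mathbf{h})\le r<r(\mathbf{h})$, which by definition lie \emph{off} the dimension-$r$ locus over $\mathbb{Q}$. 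So in this form the per-prime route is circular, not merely technically incomplete. Your closing sentence is therefore accurate: this is a genuine open problem in the paper, and your proposal is a heuristic outline rather than a proof.
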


We are able to prove the following:

\begin{theorem}
\label{thm2}
Under the assumption of Conjecture \ref{conj}, the singular series is absolutely convergent for $n=9$.  
\end{theorem}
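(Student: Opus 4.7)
The plan is first to invoke the reduction mentioned in the paragraphs preceding Conjecture \ref{conj}, which shows that only the contribution from prime moduli $q = p$ remains to be controlled for $n = 9$. It therefore suffices to establish that
\[
\sum_{p \text{ prime}} \sum_{(a,p)=1} \frac{|S(p,a)|}{p^{n}} < \infty.
\]

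The main input is a standard Weyl-differencing bound on $|S(p,a)|^{2}$. Writing $\mathbf{x} = \mathbf{y} + \mathbf{h}$ and using
\[
C(\mathbf{y}+\mathbf{h}) - C(\mathbf{y}) = 3\mathbf{y}^{T} M(\mathbf{h}) \mathbf{y} + 3\mathbf{y}^{T} L(\mathbf{h}) + C(\mathbf{h}),
\]
where $L(\mathbf{h})_{i} = \sum_{j,k} c_{ijk} h_{j} h_{k}$, the inner sum over $\mathbf{y} \pmod{p}$ is an inhomogeneous quadratic Gauß sum with matrix $3aM(\mathbf{h})$ over $\mathbb{F}_{p}$. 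For $p > 3$, standard evaluation of such Gauß sums gives
\[
|S(p,a)|^{2} \leq \sum_{\mathbf{h} \pmod{p}} p^{\,n - r_{p}(\mathbf{h})/2},
\]
uniformly in $a$. The finitely many small primes $p \in \{2,3\}$ contribute only $O(1)$ to the singular series and can be dealt with trivially.

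Working dyadically over $R < p \leq 2R$, Conjecture \ref{conj} applied with $H \asymp R$ yields, for each $0 \leq r \leq n$,
\[
\sum_{p \sim R} \#\{\mathbf{h} \pmod{p} : r_{p}(\mathbf{h}) \leq r\} \ll R^{\,r+1+\varepsilon}.
\]
Splitting the Weyl bound by the value of $r_{p}(\mathbf{h})$, summing trivially over $a$ (gaining a factor $p - 1 \ll R$), and then over $p \sim R$, the geometric sum over $r$ is dominated by the extreme term $r = n$:
\[
\sum_{p \sim R} \sum_{a} |S(p,a)|^{2} \ll R^{\,n+1} \sum_{r=0}^{n} R^{-r/2} \cdot R^{\,r+1+\varepsilon} \ll R^{\,3n/2 + 2 + \varepsilon}.
\]
Applying Cauchy--Schwarz to the joint sum over the $\ll R^{2}$ pairs $(p,a)$ then yields
\[
\sum_{p \sim R} \sum_{a} \frac{|S(p,a)|}{p^{n}} \ll R^{-n} \cdot R \cdot R^{\,3n/4 + 1 + \varepsilon} = R^{\,2 - n/4 + \varepsilon}.
\]
For $n = 9$ this equals $R^{-1/4 + \varepsilon}$, and summation over dyadic $R$ gives a convergent series, establishing the absolute convergence of $\mathfrak{S}$.

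The main obstacle is the slender margin: the exponent $2 - n/4$ equals $-1/4$ at $n = 9$, leaving no slack for further losses. This is precisely why Conjecture \ref{conj} is phrased as an average over primes: a uniform pointwise estimate of the shape $\#\{\mathbf{h} \pmod{p} : r_{p}(\mathbf{h}) \leq r\} \ll p^{\,r+\varepsilon}$ for every individual prime $p$ seems difficult to justify, but the averaged version, combined with Cauchy--Schwarz in the final step, is nevertheless just strong enough to close the argument at the critical dimension $n = 9$.
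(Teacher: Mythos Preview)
Your argument is correct and follows essentially the same route as the paper: reduce to prime moduli via Lemma~\ref{lemma}, Weyl-difference, invoke Conjecture~\ref{conj} to control the number of pairs $(\mathbf{h},p)$ with $r_p(\mathbf{h})\le r$, and close with Cauchy--Schwarz over the dyadic range. The only cosmetic difference is that the paper keeps the van~der~Corput parameter $H$ from Lemma~\ref{lemma2} and chooses $H=R^{1/2}$, whereas you let $\mathbf{h}$ run over all of $(\mathbb{Z}/p\mathbb{Z})^n$ (i.e.\ effectively $H\asymp R$) and phrase the inner bound as a quadratic Gau\ss{} sum estimate; under Conjecture~\ref{conj} any choice $R^{1/2}\le H\le R$ produces the same final exponent $R^{2-n/4+\varepsilon}=R^{-1/4+\varepsilon}$, so nothing is gained or lost either way.
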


In the last section, we return to the work of Davenport and Heath-Brown and give a short and elementary proof of \textit{Davenport's Shrinking Lemma}, which  is a crucial ingredient in the circle method approach to the cubic forms problem as pioneered by Davenport. The only previous proof of the Shrinking Lemma is due to Davenport and uses rather intricate tools from the geometry of numbers.

\subsection{Notation}

We use the usual notation $\mathcal{O}(\dots)$ and $\ll$ where the implicit constants are always allowed to depend on the cubic form $C(\mathbf{x})$. Moreover, whenever a bound involves $\varepsilon$, it means that the bound is true for all sufficiently small $\varepsilon>0$, but the implicit constant is allowed to depend on $\varepsilon$.

Moreover, we use the notation $e(x)=e^{2\pi ix}$ and $\|x\|=\min_{n \in \mathbb{Z}} \vert x-n\vert$. Whenever we write something like $\sum_{\mathbf{h}}$, the sum is restricted to integer vectors $\mathbf{h}$ and the given restrictions on the summation are to be read component-wise.

Finally, the condition $r \sim R$ denotes a restriction of $r$ to a dyadic interval $(R,2R]$.

\section{Review of previous bounds for $S(q,a)$}

The following simple lemma is good enough to recover all results previously obtained:

\begin{lemma}
\label{lemma1}
Let $q$ and $n$ be positive integers and let $M$ be a $n \times n$ matrix with integer coefficients. Then the size of the kernel of $M$ viewed as a map from $(\mathbb{Z}/q\mathbb{Z})^n$ to itself divides $\det M$.
In particular, if $q$ is a prime and $M$ has $\mathbb{F}_q$-rank at most $n-r$, then $p^r \mid \det M$.
\begin{proof}
Without loss of generality (that is, up to multiplication from both sides by invertible matrices), we may assume that $M$ is in Smith Normal Form with diagonal entries $a_1,\dots,a_n$. Then the kernel has size $\prod_{i=1}^n (a_i;q)$ which divides $\det M=\prod_{i=1}^n a_i$.
\end{proof}
\end{lemma}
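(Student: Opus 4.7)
The plan is to reduce to the diagonal case using the Smith Normal Form of $M$ over $\mathbb{Z}$. Concretely, I would invoke the classical fact that there exist $U, V \in \mathrm{GL}_n(\mathbb{Z})$ and a diagonal matrix $D = \mathrm{diag}(a_1, \ldots, a_n)$ with $a_i \in \mathbb{Z}$ such that $UMV = D$. The crucial point is that Smith Normal Form works over $\mathbb{Z}$ itself (not just over $\mathbb{Z}/q\mathbb{Z}$), so $U$ and $V$ have determinant $\pm 1$ and therefore remain invertible after reduction modulo $q$ for any $q$.

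Next I would observe that multiplication by an invertible matrix is a bijection of $(\mathbb{Z}/q\mathbb{Z})^n$, so the kernels of $M$ and of $D$ on $(\mathbb{Z}/q\mathbb{Z})^n$ have the same cardinality. For the diagonal matrix $D$, the kernel decomposes as a direct sum over coordinates, and on each coordinate the kernel of multiplication by $a_i$ on $\mathbb{Z}/q\mathbb{Z}$ has size $\gcd(a_i, q)$. Hence
\[
  |\ker M| = \prod_{i=1}^n \gcd(a_i, q),
\]
which manifestly divides $\prod_{i=1}^n a_i = \pm \det D = \pm \det M$. This proves the first assertion.

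For the "in particular" statement with $q = p$ prime, each factor $\gcd(a_i, p)$ is either $1$ or $p$, and the number of indices with $p \mid a_i$ equals $n$ minus the $\mathbb{F}_p$-rank of $D$, which equals $n - r_p(M)$ since invertible changes of basis preserve rank. If $r_p(M) \le n - r$, then at least $r$ of the $a_i$ are divisible by $p$, so $p^r$ divides $|\ker M|$ and hence divides $\det M$ by the first part.

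I do not anticipate any real obstacle here: Smith Normal Form is a standard tool, and the only subtle point to emphasize is that the transformation matrices $U, V$ need to be invertible over $\mathbb{Z}$ (so that reducing modulo $q$ preserves invertibility for every $q$ simultaneously), rather than merely over $\mathbb{Q}$.
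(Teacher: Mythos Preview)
Your proposal is correct and follows essentially the same route as the paper: reduce to Smith Normal Form over $\mathbb{Z}$, compute the kernel size as $\prod_i \gcd(a_i,q)$, and observe that this divides $\prod_i a_i = \pm\det M$. You spell out the ``in particular'' clause and the role of $U,V \in \mathrm{GL}_n(\mathbb{Z})$ more explicitly than the paper does, but the argument is the same.
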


We now recall the classical van der Corput differencing:
\begin{lemma}[Initial van der Corput Bound]
\label{lemma2}
Let $H \ge 1$ be arbitrary. Then, in the above notation, we have
\begin{equation}\label{vdc}
    \left(\frac{S(q,a)}{q^n}\right)^2 \ll \frac{1}{H^n} \sum_{1 \le \mathbf{h} \le H} \sqrt{\frac{1}{q^n}\#\{\mathbf{y} \pmod*{q}: q \mid B_i(\mathbf{y}, \mathbf{h})\}}.
\end{equation}
\end{lemma}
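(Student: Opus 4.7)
The plan is to prove Lemma \ref{lemma2} by the classical two-fold Weyl (van der Corput) differencing argument, in its complete-sum form.

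First, I would exploit the translation invariance of the complete sum $S(q,a)$: for every $\mathbf{h} \in \mathbb{Z}^n$, $S(q,a) = \sum_{\mathbf{x} \pmod q} e(aC(\mathbf{x}+\mathbf{h})/q)$, so averaging over $1 \le \mathbf{h} \le H$ and interchanging the order of summation gives
\[
H^n S(q,a) = \sum_{\mathbf{x} \pmod q} T(\mathbf{x}), \qquad T(\mathbf{x}) := \sum_{1 \le \mathbf{h} \le H} e\!\left(\frac{aC(\mathbf{x}+\mathbf{h})}{q}\right).
\]
Cauchy--Schwarz in $\mathbf{x}$ then yields $H^{2n}|S(q,a)|^2 \le q^n \sum_{\mathbf{x} \pmod q} |T(\mathbf{x})|^2$. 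Expanding $|T(\mathbf{x})|^2$ and substituting $\mathbf{x} \mapsto \mathbf{x}-\mathbf{h}_2$ converts the right-hand side into $q^n \sum_{\mathbf{h}_1,\mathbf{h}_2} G(\mathbf{h}_1 - \mathbf{h}_2)$, where
\[
G(\mathbf{h}) := \sum_{\mathbf{x} \pmod q} e\!\left(\frac{a(C(\mathbf{x}+\mathbf{h})-C(\mathbf{x}))}{q}\right).
\]
Counting the representations of each difference and using the symmetry $G(-\mathbf{h})=\overline{G(\mathbf{h})}$, this is $\ll q^n H^n \sum_{1 \le \mathbf{h} \le H} |G(\mathbf{h})|$, with the diagonal contribution absorbed into the implicit constant.

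The next step is to bound $|G(\mathbf{h})|$ by a second differencing. Using the expansion $C(\mathbf{x}+\mathbf{h})-C(\mathbf{x}) = 3\mathbf{x}^{\top} M(\mathbf{h})\mathbf{x} + 3\mathbf{x}^{\top} M(\mathbf{h})\mathbf{h} + C(\mathbf{h})$ (with the factor $3$ arising from multinomial expansion), one sees that $G(\mathbf{h})$ is a quadratic Gauss sum in $\mathbf{x}$ with leading matrix $3M(\mathbf{h})$. Squaring and setting $\mathbf{y} = \mathbf{x}-\mathbf{x}'$, the phase becomes linear in $\mathbf{x}'$ with coefficient $6a M(\mathbf{h})\mathbf{y}/q$. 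Executing the $\mathbf{x}'$-sum by orthogonality yields
\[
|G(\mathbf{h})|^2 \ll q^n \cdot \#\{\mathbf{y} \pmod q : q \mid B_i(\mathbf{y},\mathbf{h}) \text{ for all } i\},
\]
where I have used the identity $B_i(\mathbf{y},\mathbf{h}) = (M(\mathbf{h})\mathbf{y})_i$, a consequence of the full symmetry of the coefficients $c_{ijk}$. Assembling all the estimates gives $H^{2n}|S(q,a)|^2 \ll q^{3n/2} H^n \sum_{1 \le \mathbf{h} \le H} \sqrt{N(\mathbf{h})}$, which is the claim after dividing by $q^{2n} H^{2n}$.

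The main technical subtlety is that the factors of $3$ and $6$ arising from differencing a cubic need not be coprime to $q$. In the second-differencing step this replaces the exact condition $q \mid (M(\mathbf{h})\mathbf{y})_i$ by the weaker divisibility $q \mid 6a(M(\mathbf{h})\mathbf{y})_i$, enlarging the count by at most a factor $6^n = O(1)$ (using $(a,q)=1$), which is absorbed into the implicit constant. The auxiliary phase $e((a/q)(3\mathbf{y}^{\top} M(\mathbf{h})\mathbf{y} + 3\mathbf{y}^{\top} M(\mathbf{h})\mathbf{h}))$ surviving outside the $\mathbf{x}'$-sum is bounded trivially by $1$, so it costs no loss. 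Boundary contributions from $\mathbf{h}$ with some vanishing coordinate, as well as the diagonal $\mathbf{h}=0$, are handled by the trivial bound $N(\mathbf{h}) \le q^n$ and are dominated by the main term.
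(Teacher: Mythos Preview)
Your argument is correct and follows exactly the paper's route: translation-average, Cauchy--Schwarz, then square the inner sum and use orthogonality to detect $q\mid B_i(\mathbf{y},\mathbf{h})$; you are in fact more careful than the paper in tracking the factors $3$ and $6$ and the $\mathbf{x}'$-independent phase, which the paper silently absorbs. The only quibble is that the symmetry $G(-\mathbf{h})=\overline{G(\mathbf{h})}$ alone does not collapse the full box $-H\le \mathbf{h}\le H$ to the positive octant $1\le \mathbf{h}\le H$ as stated in the lemma, but the paper itself is inconsistent here (its proof and all applications use the full box), so this is a defect of the statement rather than of your proof.
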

\begin{proof}
We set out by applying Cauchy-Schwarz to the identity
\[
    S(q,a)=\frac{1}{H^n}\sum_{\mathbf{x}\pmod*{q}} \sum_{1 \le \mathbf{h} \le H} e\left(\frac{aC(\mathbf{\mathbf{x}+\mathbf{h}})}{q}\right)
\]
to obtain after some manipulations
\[
\vert S(q,a)\vert^2 \ll \frac{q^n}{H^n} \sum_{-H \le \mathbf{h} \le H} \left\vert\sum_{\mathbf{x} \pmod*{q}} e\left(\frac{a\left(C(\mathbf{x}+\mathbf{h})-C(\mathbf{x})\right)}{q}\right)\right\vert.
\]
The lemma now follows by noting that the square of the absolute value of the inner sum is just
\[
\sum_{\mathbf{x},\mathbf{y}} e\left(\frac{a\left(C(\mathbf{x}+\mathbf{y}+\mathbf{h})-C(\mathbf{x}+\mathbf{y})-C(\mathbf{x}+\mathbf{h})+C(\mathbf{x}\right)}{q}\right)=\sum_{\mathbf{x},\mathbf{y}} e\left(\frac{a\sum_i x_iB_i(\mathbf{y},\mathbf{h})}{q}\right)
\]
and using orthogonality.
\end{proof}

Next, from Lemma \ref{lemma1} we see that $q^{r(\mathbf{h})-n}\#\{\mathbf{y} \pmod*{q}: q \mid B_i(\mathbf{y}, \mathbf{h})\}$ divides a non-zero $r(\mathbf{h}) \times r(\mathbf{h})$ minor of $M$ so that in particular
\[
\frac{1}{q^n}\#\{\mathbf{y} \pmod*{q}: q \mid B_i(\mathbf{y}, \mathbf{h})\} \ll \left(\frac{H}{q}\right)^{r(\mathbf{h})}. 
\]
Inserting this into Lemma \ref{lemma2} and using the geometric condition \eqref{gc}, we find that
\[
    \left(\frac{S(q,a)}{q^n}\right)^2 \ll \frac{1}{H^n} \sum_{-H \le \mathbf{h} \le H} \left(\frac{H}{q}\right)^{r(\mathbf{h})/2} \ll \frac{q^{\varepsilon}}{H^n} \sum_{r=0}^n \left(\frac{H^3}{q}\right)^{r/2}\ll q^{\varepsilon}\left(\frac{1}{H^n}+\frac{H^{n/2}}{q^{n/2}}\right)
\]
and putting $H=q^{1/3}$, we recover Heath-Brown's pointwise bound $S(q,a) \ll q^{5n/6+\varepsilon}$.

Recalling the definition of the $p$-adic factor in the product expansion of $\mathfrak{S}$, we now find that
\begin{align*}
    \chi_p&=\sum_{k=0}^{\infty} \sum_{(a;p^k)=1} \frac{S(p^k,a)}{p^{nk}}\\
    &=1+\sum_{(a;p)=1} \frac{S(p,a)}{p^n}+\mathcal{O}\left(\sum_{k=2}^{\infty} p^{k(1-n/6)+\varepsilon}\right)\\
    &=1+\sum_{(a;p)=1} \frac{S(p,a)}{p^n}+\mathcal{O}\left(p^{2-n/3+\varepsilon}\right)
\end{align*}
so that the estimation of the terms with $k \ge 2$ is satisfactory for the question of absolute convergence of $\mathfrak{S}$ as soon as $n>9$.

To establish Theorem \ref{thm1}, it therefore remains to show that
\[
    \sum_{p} \sum_{(a;p)=1} \frac{S(p,a)}{p^n}
\]
converges absolutely for $n \ge 10$. It would therefore clearly suffice to show that
\[
\sum_{p \sim R} \max_{(a;p)=1} \left\vert \frac{S(p,a)}{p^n}\right\vert \ll R^{-1-\delta}
\]
for which, by Cauchy-Schwarz, it suffices to establish
\[
    \sum_{p \sim R} \max_{(a;p)=1} \left\vert \frac{S(p,a)}{p^n}\right\vert^2 \ll R^{-3-\delta}
\]
for all choices of $R \ge 1$.

Using our previous line of argument, the LHS is bounded by
\[
    \frac{1}{H^n} \sum_{-H \le \mathbf{h} \le H} \sum_{R \le p<2R} \sqrt{\frac{1}{p^n}\#\{\mathbf{y} \pmod*{p}: p \mid B_i(\mathbf{y}, \mathbf{h})\}}=\frac{1}{H^n} \sum_{-H \le \mathbf{h} \le H} \sum_{R \le p<2R} p^{-r_p(\mathbf{h})/2}
\]
where $r_p(\mathbf{h})$ is the $\mathbb{F}_p$-rank of $M(\mathbf{h})$.
Heath-Brown's idea is now to distinguish two cases:

Those pairs $(\mathbf{h},p)$ with $r_p(\mathbf{h})=r(\mathbf{h})$ give a contribution bounded by
\begin{equation}
\label{fall1}
    \frac{1}{H^n} \sum_{-H \le \mathbf{h} \le H} \sum_{R \le p<2R} p^{-r(\mathbf{h})/2} \ll \frac{1}{H^n}\sum_{r=0}^n H^{r+\varepsilon} R^{1-r/2} \ll H^{\varepsilon}\left(\frac{R}{H^n}+\frac{R}{R^{n/2}}\right)
\end{equation}
by the Geometric Condition \eqref{gc}. The last term is satisfactory for $n>8$.

On the other hand, we need to estimate the contribution from those pairs $(\mathbf{h},p)$ with $r_p(\mathbf{h})<r(\mathbf{h})$. Here we use the implication from Lemma \ref{lemma1} that $p^{r(\mathbf{h})-r_p(\mathbf{h})}$ must divide a non-zero $r(\mathbf{h}) \times r(\mathbf{h})$-minor of $M(\mathbf{h})$ and is hence $\mathcal{O}(H^{r(\mathbf{h})})$ so that
\begin{equation}
\label{fall2.1}
    p^{-r_p(\mathbf{h})} \ll \left(\frac{H}{p}\right)^{r(\mathbf{h})}.
\end{equation}
Moreover, $p$ being a divisor of such a minor, there are at most $H^{\varepsilon}$ choices of such $p$ for any fixed $\mathbf{h}$. The total contribution of such pairs $(\mathbf{h},p)$ can therefore be bounded by
\begin{equation}
\label{fall2}
    \frac{H^{\varepsilon}}{H^n} \sum_{-H \le \mathbf{h} \le H} \left(\frac{H}{R}\right)^{r(\mathbf{h})/2} \ll H^{\varepsilon-n} \sum_{r=0}^n \left(\frac{H}{R}\right)^{r/2} \ll H^{\varepsilon}\left(\frac{1}{H^n}+\left(\frac{H^3}{R}\right)^{n/2}\right)
\end{equation}
again using the geometric condition.

Comparing the contributions from \eqref{fall1} and \eqref{fall2} we find that the optimal choice is $H=R^{\frac{n+2}{3n}}$ leading to the bound $R^{-(n-1)/3+\varepsilon}$ which is satisfactory when $n>10$.

\section{The case of ten variables}

When $n=10$, we observe that $H=R^{2/5+\delta}$ for sufficiently small $\delta>0$ leads to a satisfatory contribution from \eqref{fall1} and from all terms in \eqref{fall2} except when $r=n=10$. Moreover, even for this term it suffices to save another small power of $R$, which we do in \eqref{fall2.1} unless $r_p(\mathbf{h})=6$.
It therefore suffices to show that
\[
    \#\{\mathbf{h} \le H, p \sim R: r(\mathbf{h})=10, r_p(\mathbf{h})=6\} \ll H^{10-\delta'}
\]
for some $\delta'>0$ whenever $H=R^{2/5+\delta}$ for sufficiently small $\delta>0$.

To prove this, we use an argument inspired by a trick of Davenport \cite{davenport63} which he used to go from $17$ to $16$ variables. However, the presence of the extra averaging over $p$ requires a new idea.

By Lemma \ref{lemma1}, we have $p^4 \mid D( \mathbf{h})$ for all vectors $\mathbf{h}$ in question. Moreover, there are $p^4$ vectors $\mathbf{y} \in \{0,1,2,\dots,p-1\}^n$ with $p \mid B_i(\mathbf{y},\mathbf{h})$ for all $i$.

By the Pigeonhole principle, two of them differ by $\mathcal{O}(p^{3/5})$ in each component and by linearity of the $B_i$, this means that for each such $\mathbf{h}$ we get one solution $\mathbf{y}=\mathbf{y}(\mathbf{h}) \ne 0$ with $\|\mathbf{y}\|_{\infty} \ll p^{3/5}$ and $p \mid B_i(\mathbf{y},\mathbf{h})$.

Writing $B_i(\mathbf{y},\mathbf{h})=pm_i$, we find that $m_i=m_i(\mathbf{h}) \ll R^{\delta}$. Moreover, not all $m_i$ are zero since we assumed $r(\mathbf{h})=10$.

We can now count the number of pairs $(\mathbf{h},p)$ in question as follows: There are $\ll R^{10\delta}$ possible choices of the $m_i$.
For a fixed choice of $(m_1,\dots,m_n)$, we then study the number of possible choices of $(\mathbf{h},p)$. The general solution of the system $B_i(\mathbf{y},\mathbf{h})=pm_i$ is given by
\[
    y_j=p \cdot \frac{\sum_k m_k E_{jk}(\mathbf{h})}{D(\mathbf{h})}
\]
where the $E_{jk}$ are certain $9 \times 9$ minors of $M(\mathbf{h})$, in particular homogeneous forms of degree $9$ in $\mathbf{h}$.

Now certainly, for our given choice of the $m_i$, there is one $j$ such that the degree-$9$ form $E(\mathbf{h}):=\sum_k m_k E_{j,k}(\mathbf{h})$ is not identically zero. We conclude that $D(\mathbf{h}) \mid p \cdot E(\mathbf{h})$.

Let $G$ be the greatest common divisor of $D$ and $E$ and write $D=GD'$ and $E=GE'$ so that $D'(\mathbf{h}) \mid p \cdot E'(\mathbf{h})$ and $D'$ is coprime to $E'$. We thus find by Bézout's Theorem a non-zero linear combination $F$ of $D'$ and $E'$ that depends only on $h_2,\dots,h_n$. Hence $D'(\mathbf{h}) \mid p \cdot F(h_2,\dots,h_n)$. Note that the coefficients of all the polynomials depend on the $m_i$, but are all polynomially bounded in terms of $R$ which is sufficient for our application.

Now there are $\mathcal{O}(H^9)$ values of $\mathbf{h}$ where $F$ is zero and then $p$ as a divisor of $D(\mathbf{h})$ is determined up to $H^{\varepsilon}$ many choices, leading to a total bound of $H^{9+\varepsilon}$ for the number of pairs $(\mathbf{h},p)$ in this case.

On the other hand, if $F(h_2,\dots,h_{10})$ is non-zero, we see that $p \mid F(h_2,\dots,h_{10})$ by the following ad-hoc bootstrapping argument: Since $p^4 \mid D(\mathbf{h})=G(\mathbf{h}) \cdot D'(\mathbf{h})$ and $\deg G \le 9$ we have $G(\mathbf{h}) \ll H^9<p^4$ if $\delta>0$ is sufficiently small. Hence $p \mid D'(\mathbf{h})$. But if $\delta$ is small, this forces $\deg D' \ge 3$ and hence $\deg G \le 7$ so that $G(\mathbf{h}) \ll H^7<p^3$, again if $\delta$ is small. Hence $p^2 \mid D'(\mathbf{h})$ and hence $p \mid F(h_2,\dots,h_{10})$ as desired.

Finally, for any choice of $h_2,\dots,h_{10}$ with $F(\mathbf{h}) \ne 0$, this determines $p$ and $D'(\mathbf{h})$ up to $H^{\varepsilon}$ many choices and then also $h_1$ is determined up to finitely many choices, unless we are in a proper Zariski-closed subset of $h_2,\dots,h_{10}$. In any case, the total number of pairs $(\mathbf{h},p)$ can be bounded by $H^{9+\varepsilon}$. Summing up, we have thus shown that
\[
    \#\{\mathbf{h} \le H, p \sim R: r(\mathbf{h})=10, r_p(\mathbf{h})=6\} \ll R^{10\delta} \cdot H^{9+\varepsilon}
\]
which is satisfactory for $\delta>0$ sufficiently small. This finishes the proof of Theorem \ref{thm1}.

\section{The case of nine variables}

We now set out to discuss the case $n=9$, aiming for a proof of Theorem \ref{thm2}. To begin with, we need to discuss the case of higher prime powers. The contribution to $\chi_p$ of $S(p^k,a)$ for $k \ge 3$ is seen to be satisfactory even for $n=9$. For the contribution of the terms with $k=2$, our pointwise bound $S(p^2,a) \ll p^{5n/3+\varepsilon}$ just fails to be good enough when $n=9$.

However, we can use the averaging trick introduced in the previous section to also improve on this bound and therefore reduce the problem of absolute convergence of $\mathfrak{S}$ for $n=9$ to the study of $S(p,a)$ for primes $p$:

\begin{lemma}
\label{lemma}
For $n=9$, the sum
\[
    \sum_p \sum_{(a;p^2)=1} \frac{S(p^2,a)}{p^{2n}}
\]
is absolutely convergent. In particular, the singular series for $n=9$ converges absolutely if and only if
\[
    \sum_p \sum_{(a;p)=1} \frac{S(p,a)}{p^n}
\]
is absolutely convergent.
\end{lemma}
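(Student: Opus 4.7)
The ``if and only if'' is a formal consequence of the first claim. Indeed, repeating the calculation at the start of Section~3 specialized to $k \ge 3$ shows that each local factor $\chi_p$ receives a contribution $\mathcal{O}(p^{3-n/2+\varepsilon})$ from the terms with $k \ge 3$, which is summable over $p$ already for $n=9$. Hence absolute convergence of $\mathfrak{S}$ for $n=9$ is equivalent to the joint absolute convergence of the $k=1$ and $k=2$ pieces, and the first claim is precisely the $k=2$ part.

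For the first claim, the plan is to transplant the Cauchy--Schwarz + van der Corput + averaging scheme of Sections~3 and~4 to the modulus $q = p^2$. Combining Cauchy--Schwarz over $(p,a)$ with Lemma~\ref{lemma2} applied with parameter $H = R^\alpha$ yields
\[
    \sum_{p \sim R} \sum_{(a,p^2)=1} \frac{|S(p^2,a)|}{p^{2n}} \ll R^{5/2} H^{-n/2} T(H,R)^{1/2},
\]
where $T(H,R) = \sum_{p \sim R} \sum_{\mathbf{h} \le H} \sqrt{|\ker(M(\mathbf{h}) \bmod p^2)|/p^{2n}}$. For $n=9$ it therefore suffices to find $\alpha$ and $\delta > 0$ with $T(H,R) \ll R^{-5-2\delta} H^9$.

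A short computation with the Smith normal form of $M(\mathbf{h})$ gives
\[
    \sqrt{|\ker(M(\mathbf{h}) \bmod p^2)|/p^{2n}} = p^{-r_p(\mathbf{h}) - s_1(\mathbf{h},p)/2},
\]
where $s_1(\mathbf{h},p)$ counts the elementary divisors of $M(\mathbf{h})$ whose $p$-valuation equals exactly~$1$. In the generic case $r_p(\mathbf{h}) = r(\mathbf{h})$ one automatically has $s_1 = 0$, and the exponent collapses to $-r(\mathbf{h})$, a full factor of $p^{r(\mathbf{h})/2}$ stronger than the analogous factor $p^{-r(\mathbf{h})/2}$ of the $k=1$ analysis. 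This extra room is essentially what makes $k=2$ tractable at $n=9$. Splitting $(\mathbf{h},p)$ by rank as in Section~3, the ``no rank drop'' contribution is handled by the geometric condition in the manner of~\eqref{fall1}, and the ``rank drop with $r(\mathbf{h}) < n$'' contribution by Lemma~\ref{lemma1} combined with the geometric condition (only $H^\varepsilon$ primes divide a nonzero $r(\mathbf{h}) \times r(\mathbf{h})$ minor of $M(\mathbf{h})$) in the manner of~\eqref{fall2.1}--\eqref{fall2}; a short optimisation in $\alpha$ shows both contributions are satisfactory.

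The only remaining, and main, obstacle is the residual edge case $r(\mathbf{h}) = n = 9$ with $r_p(\mathbf{h})$ strictly smaller, which I plan to handle by adapting the averaging trick of Section~4 to the modulus $p^2$. Since $|\ker(M(\mathbf{h}) \bmod p^2)| \ge p^{18 - 2r_p(\mathbf{h})}$, a pigeonhole in $(\mathbb{Z}/p^2\mathbb{Z})^9$ produces a nonzero $\mathbf{y} = \mathbf{y}(\mathbf{h}) \in \mathbb{Z}^9$ with $\|\mathbf{y}\|_\infty \ll p^{(2r_p + s_1)/9}$ and $B_i(\mathbf{y},\mathbf{h}) \equiv 0 \pmod{p^2}$; writing $B_i(\mathbf{y},\mathbf{h}) = p^2 m_i$ makes the $m_i$ of size $\mathcal{O}(R^{O(\delta)})$ for a suitable $\alpha$, after which the Cramer's-rule expression of $\mathbf{y}$, the extraction of $\gcd(D(\mathbf{h}),E(\mathbf{h}))$ with $E(\mathbf{h}) = \sum_k m_k E_{j,k}(\mathbf{h})$, and the bootstrapping on $v_p(D(\mathbf{h}))$ all proceed in close analogy with Section~4. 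I expect the bootstrapping step to be the hardest part: it must propagate $p^2$-divisibility rather than mere $p$-divisibility through the Cramer's-rule identity, so one additional order of $p$-adic information has to be tracked throughout the interplay between $D(\mathbf{h})$, $E(\mathbf{h})$ and the auxiliary variable-eliminating form $F$.
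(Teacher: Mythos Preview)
Your reduction via Cauchy--Schwarz and Lemma~\ref{lemma2} is correct, as is the Smith-normal-form identity $\sqrt{|\ker(M(\mathbf{h}) \bmod p^2)|/p^{2n}} = p^{-r_p(\mathbf{h}) - s_1(\mathbf{h},p)/2}$ and the split into no-rank-drop and rank-drop cases. However, you are making the argument substantially harder than necessary by singling out the case $r(\mathbf{h}) = n$ with rank drop for treatment via the Section~4 machinery.

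The paper's proof handles \emph{all} rank-drop cases, including $r(\mathbf{h}) = n$, by the same elementary bound. Lemma~\ref{lemma1} applied with $q = p^2$ gives
\[
    \sqrt{|\ker(M(\mathbf{h}) \bmod p^2)|/p^{2n}} \ll (H/p^2)^{r(\mathbf{h})/2}
\]
uniformly in $r(\mathbf{h})$; in your notation this amounts to $p^{2(r - r_p) - s_1} \le |\Delta| \ll H^r$ for a nonzero $r \times r$ minor $\Delta$ of $M(\mathbf{h})$. Summing over $\mathbf{h}$ via the geometric condition and over the at most $H^\varepsilon$ admissible primes for each $\mathbf{h}$, the total rank-drop contribution to your $T(H,R)$ is
\[
    \ll R^\varepsilon \sum_{r=0}^n H^r (H/R^2)^{r/2} \ll R^\varepsilon\bigl(1 + H^{3n/2}/R^n\bigr).
\]
Together with the no-rank-drop bound $T \ll R^{1+\varepsilon}$ (valid for $H \le R$), the choice $H = (R^2)^{(n+1)/(3n)}$ yields $T/H^n \ll R^{1 - 2(n+1)/3 + \varepsilon}$, which for $n = 9$ is $R^{-17/3 + \varepsilon} \ll R^{-5 - \delta}$. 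No residual edge case remains.

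The reason the $r = n$ rank-drop term required the pigeonhole/Cramer/bootstrapping device in Section~4 (modulus $p$, $n=10$) but not here is precisely the extra factor of $p$ in the modulus: the rank-drop bound improves from $(H/p)^{r/2}$ to $(H/p^2)^{r/2}$, and simultaneously the generic-case exponent improves from $-r/2$ to $-r$. These two gains shift the balance point so that the straightforward optimisation already works for all $n > 8$. Your planned adaptation of the Section~4 argument is therefore unnecessary --- and, as you yourself anticipate, propagating $p^2$-divisibility through the bootstrapping would be genuinely delicate, so it is fortunate that it can be avoided entirely.
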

\begin{proof}
As before, a dyadic decomposition and an application of Cauchy-Schwarz reduce the problem to showing that
\[
    \sum_{p \sim R} \max_{(a;p^2)=1} \left\vert \frac{S(p^2,a)}{p^{2n}}\right\vert^2 \ll R^{-5-\delta}.
\]
From Lemma \ref{lemma2}, we see that the LHS is bounded by
\[
    \frac{1}{H^n} \sum_{-H \le \mathbf{h} \le H} \sum_{p \sim R} \sqrt{\frac{1}{p^{2n}} \#\{\mathbf{y} \pmod*{p^2}: p^2 \mid B_i(\mathbf{y},\mathbf{h})}\}.
\]
We continue by separating the cases $r_p(\mathbf{h})=r(\mathbf{h})$ and $r_p(\mathbf{h})<r(\mathbf{h})$. In the first case, the expression under the root is $(p^2)^{-r(\mathbf{h})}$ and using the geometric condition \eqref{gc} we obtain a contribution bounded by
\[
    \frac{RH^{\varepsilon}}{H^n} \sum_{r=0}^n \frac{H^r}{(R^2)^{r/2}} \ll R^{1+\varepsilon} \left(\frac{1}{H^n}+\frac{1}{R^n}\right)
\]
(compare this with \eqref{fall1}). In the second case, for each $\mathbf{h}$, there are at most $R^{\varepsilon}$ choices of $p$ and for each such pair the expression under the root is bounded by $\left(\frac{H}{p^2}\right)^{r(\mathbf{h})/2}$ so that the contribution in this case can be bounded by
\[
    \frac{R^{\varepsilon}}{H^n} \sum_{r=0}^n H^r \left(\frac{H}{R^2}\right)^{r/2} \ll R^{\varepsilon} \cdot \left(\frac{1}{H^n}+\left(\frac{H}{R^2}\right)^{n/2}\right)
\]
(compare this with \eqref{fall2}) and choosing $H=(R^2)^{\frac{n+1}{3n}}$ we end up with the total contribution of $\ll R^{1-\frac{2(n+1)}{3}}$ from both cases together, which is satisfactory as soon as $n>8$.
\end{proof}

We are now ready to prove Theorem \ref{thm2}:

\begin{proof}[Proof of Theorem \ref{thm2}]
By Lemma \ref{lemma} and the arguments from the previous discussion, it suffices to prove that
\[
    \sum_{p \sim R} \max_{(a;p)=1} \left\vert \frac{S(p,a)}{p^n}\right\vert^2 \ll R^{-1-\delta}
\]
for all choices of $R \ge 1$. Using Lemma \ref{lemma2}, the LHS is bounded by
\[
\frac{1}{H^n} \sum_{-H \le \mathbf{h} \le H} \sum_{p \sim R} p^{-r_p(\mathbf{h})/2} \ll \frac{1}{H^n} \sum_{r=0}^n R^{-r/2} \#\{\mathbf{h} \le H, p \sim R: r_p(\mathbf{h})=r\}.
\]
Assuming Conjecture \ref{conj}, this can be further estimated as
\[
    \ll \frac{1}{H^n} \sum_{r=0}^n R^{-r/2} H^rR^{1+\varepsilon} \ll \frac{R^{1+\varepsilon}}{H^n}+\frac{R^{1+\varepsilon}}{R^{n/2}}.
\]
Choosing e.g. $H=R^{1/2}$ we see that this is satisfactory as soon as $n>8$.
\end{proof}

Indeed, as can be seen from the above proof, only something weaker than Conjecture \ref{conj} is actually required. However, we do believe that this is the \lq right\rq{} way to put the conjecture, as the proposed upper bound is exactly the contribution that we a priori get from the terms with $r(\mathbf{h})=r$ and $p$ arbitrary, using the geometric condition \eqref{gc}.

We close this section by a few more remarks regarding Conjecture \ref{conj}. To start with, the cases $r=0$ and $r=n$ are easy to establish. Moreover, we can also prove the case $r=n-1$: Those $\mathbf{h}$ with $r(\mathbf{h})=n-1$ produce a satisfactory contribution by the geometric condition \eqref{gc}, as explained above. On the other hand, there can be only $\mathcal{O}(H^{n+\varepsilon})$ pairs $(\mathbf{h},p)$ with $r(\mathbf{h})=n$ and $r_p(\mathbf{h})=n-1$ as then $p \mid D(\mathbf{h})$ and so $p$ is determined by $\mathbf{h}$ up to $\mathcal{O}(H^{\varepsilon})$ many choices.

\section{A new proof of Davenport's Shrinking Lemma}

In previous work on general cubic forms, a crucial tool for dealing with the bilinear counting problems as seen in \eqref{vdc} as well as more general versions for the Weyl sums was the following result of Davenport, also known as the Shrinking Lemma.

\begin{lemma}[Davenport's Shrinking Lemma]
\label{shrink}
Let $L=(L_1,\dots,L_n) \in \mathbb{R}^{n \times n}$ be a symmetric matrix. Let $P \ge 1$ and $0<Z<1$ be real numbers. Then
\[\#\left\{\mathbf{x} \in \mathbb{Z}^n: \vert \mathbf{x}\vert \le P, \|L_i(\mathbf{x})\| <\frac{1}{2nP} \forall i\right\} \le \left(\frac{4}{Z}\right)^n \cdot \#\left\{\mathbf{x} \in \mathbb{Z}^n: \vert \mathbf{x}\vert \le ZP, \|L_i(\mathbf{x})\| <\frac{Z}{2nP}\forall i\right\}.\]
Here, $\|z\|$ denotes the distance of $z$ to the nearest integer.
\end{lemma}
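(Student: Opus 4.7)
The plan is to reformulate both quantities as counts of lattice points of a lattice $\Lambda \subset \mathbb{R}^{2n}$ inside a symmetric convex body $K \subset \mathbb{R}^{2n}$, exploit the symmetry of $L$ via a symplectic pairing to confine $\Lambda \cap K$ to a subspace of dimension at most $n$, and conclude by a Dirichlet-type pigeonhole argument in that subspace. Concretely, I would set
\[
K = \{(\mathbf{a}, \mathbf{v}) \in \mathbb{R}^{2n} : |a_i| \le P,\ |v_i| < \tfrac{1}{2nP}\}, \qquad \Lambda = \{(\mathbf{a}, \mathbf{m} - L\mathbf{a}) : \mathbf{a}, \mathbf{m} \in \mathbb{Z}^n\}.
\]
Since $\tfrac{1}{2nP} \le \tfrac{1}{2}$, the integer vector $\mathbf{m}$ closest to $L\mathbf{x}$ is uniquely determined, and $\mathbf{x} \mapsto (\mathbf{x}, \mathbf{m} - L\mathbf{x})$ is a bijection between the left-hand set of the lemma and $\Lambda \cap K$; the same rescaling identifies the right-hand set with $\Lambda \cap ZK$.

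The decisive step is to introduce the antisymmetric pairing $\omega((\mathbf{a}, \mathbf{v}), (\mathbf{a}', \mathbf{v}')) = \mathbf{a} \cdot \mathbf{v}' - \mathbf{v} \cdot \mathbf{a}'$. Substituting $\mathbf{v} = \mathbf{m} - L\mathbf{a}$ and $\mathbf{v}' = \mathbf{m}' - L\mathbf{a}'$ for two points of $\Lambda$, the cross-terms $\mathbf{a}^T L \mathbf{a}'$ and $\mathbf{a}^T L^T \mathbf{a}'$ cancel precisely because $L = L^T$, leaving $\omega = \mathbf{a} \cdot \mathbf{m}' - \mathbf{m} \cdot \mathbf{a}' \in \mathbb{Z}$. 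On the other hand, for any two points of $K$ the triangle inequality gives $|\mathbf{a} \cdot \mathbf{v}'|, |\mathbf{v} \cdot \mathbf{a}'| < n \cdot P \cdot \tfrac{1}{2nP} = \tfrac{1}{2}$, so $|\omega| < 1$ on $K \times K$. Hence $\omega(u,u')$ is an integer of absolute value strictly less than $1$ for $u, u' \in \Lambda \cap K$, so it vanishes identically there. Consequently the real span $V := \mathrm{span}_{\mathbb{R}}(\Lambda \cap K)$ is isotropic for the standard symplectic form $\omega$ on $\mathbb{R}^{2n}$, which forces $k := \dim V \le n$.

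For the pigeonhole step, I would set $N = \lceil 2/Z \rceil$, so that $2/N \le Z$ and $N \le 4/Z$ (using $Z < 1$), and partition the full-rank sublattice $\Lambda_V := \Lambda \cap V$ into its $N^k$ cosets modulo $N \Lambda_V$. For each coset $C$ meeting $K$, fix a representative $\mathbf{v}_0 \in C \cap K$; then for every $\mathbf{v} \in C \cap K$ the vector $(\mathbf{v} - \mathbf{v}_0)/N$ lies in $\Lambda_V \cap (2/N) K \subseteq \Lambda_V \cap ZK$, producing an injection $C \cap K \hookrightarrow \Lambda_V \cap ZK$. Summing over the $N^k$ cosets yields
\[
\#(\Lambda \cap K) = \#(\Lambda_V \cap K) \le N^k \cdot \#(\Lambda_V \cap ZK) \le (4/Z)^n \cdot \#(\Lambda \cap ZK),
\]
which, via the bijection of the first paragraph, is exactly the stated inequality. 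The main obstacle — really the heart of the argument — is the middle paragraph: the integer-valuedness of $\omega$ on $\Lambda$ depends crucially on the symmetry of $L$, and it is precisely this that cuts the effective exponent in the final bound from a naive $2n$ down to $n$; the first and third steps are essentially bookkeeping.
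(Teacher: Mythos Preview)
Your proof is correct and follows essentially the same strategy as the paper's: both hinge on the observation that the symmetry of $L$ forces the vectors $(\mathbf{x},[L\mathbf{x}])$ (equivalently, your points of $\Lambda\cap K$) to span a subspace of dimension at most $n$---the paper phrases this as a rank/orthogonality argument, you as isotropy for the standard symplectic form---and then finish by a pigeonhole modulo an integer in $[2/Z,4/Z]$. The only cosmetic differences are that the paper selects a prime $q$ via Bertrand and pigeonholes in $(\mathbb{Z}/q\mathbb{Z})^{2n}$, while you take $N=\lceil 2/Z\rceil$ and pigeonhole directly inside the rank-$k$ sublattice $\Lambda_V$.
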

In only dealing with the Gauß sums in the above discussion we were able to circumvent the use of the lemma, using Lemma \ref{lemma1} and the Pigeonhole Principle as a substitute, but for the Weyl sums it remains an essential ingredient.
Since the only previous proof uses rather intricate tools from the geometry of numbers, it is therefore desirable to present a short and elementary proof which we do in this section.
\begin{proof}[Proof of Lemma \ref{shrink}]
We begin by choosing a prime $q$ such that $\frac{2}{Z} \le q \le \frac{4}{Z}$ which is always possible. Then it will suffice to prove that
\[\#\left\{\mathbf{x} \in \mathbb{Z}^n: \vert \mathbf{x}\vert \le P, \|L_i(\mathbf{x})\| <\frac{1}{2nP} \forall i\right\} \le q^n \cdot \#\left\{\mathbf{x} \in \mathbb{Z}^n: \vert \mathbf{x}\vert \le \frac{2P}{q}, \|L_i(\mathbf{x})\| <\frac{1}{nqP}\forall i\right\}.\]
Denote by $[z]$ the nearest integer to $z$. For each $(\mathbf{a},\mathbf{b}) \in (\mathbb{Z}/q\mathbb{Z})^2$ let
\[N_{\mathbf{a},\mathbf{b}}=\#\left\{\mathbf{x} \in \mathbb{Z}^n: \vert \mathbf{x}\vert \le P, \|L_i(\mathbf{x})\| <\frac{1}{2nP} \forall i, \mathbf{x} \equiv \mathbf{a} \pmod*{q}, ([L_i(\mathbf{x})])_i \equiv \mathbf{b} \pmod*{q}\right\}.\]
Clearly the LHS of our inequality now decomposes as $\sum_{\mathbf{a},\mathbf{b}} N_{\mathbf{a},\mathbf{b}}$. Now observe that if $\mathbf{x}_1$ and $\mathbf{x}_2$ are counted by $N_{\mathbf{a},\mathbf{b}}$, then $\mathbf{x}:=\frac{\mathbf{x}_2-\mathbf{x}_1}{q}$ is counted by the RHS of our inequality. Hence it follows that
\[N_{\mathbf{a},\mathbf{b}} \le \#\left\{\mathbf{x} \in \mathbb{Z}^n: \vert \mathbf{x}\vert \le \frac{2P}{q}, \|L_i(\mathbf{x})\| <\frac{1}{nqP}\forall i\right\}\]
which is already enough to deduce our claimed inequality with a factor of $q^{2n}$ instead of $q^n$, since there are $q^{2n}$ choices of $(\mathbf{a},\mathbf{b})$.

\medskip

To conclude the stronger claim, it will thus suffice to show that $N_{\mathbf{a},\mathbf{b}} \ne 0$ only for at most $q^n$ choices of $(\mathbf{a},\mathbf{b})$.

Indeed, this will follow immediately if we can show that the $2n \times 2n$ matrix with columns $\left(\mathbf{x},[L_i(\mathbf{x})]\right)$ for $\mathbf{x}$ counted by the LHS of our inequality has rank at most $n$.

However, note that by our estimate on $\|L_i(\mathbf{x})\|$ and the symmetry of $L$ we have
\[\mathbf{y} \cdot ([L_i(\mathbf{x})])_i=\mathbf{x} \cdot ([L_i(\mathbf{y})])_i\]
for all $\mathbf{x}$ and $\mathbf{y}$ counted, since both sides are integers and differ by less than $2n \cdot P \cdot \frac{1}{2nP}=1$.

Hence, if we add to our matrix the columns $\left(-[L_i(\mathbf{x})],\mathbf{x}\right)$ each column of the new part will be orthogonal to each column of the old part, and since they both have the same rank, both parts can have rank at most $n$, as desired.
\end{proof}

\section*{Acknowledgments}

This work was carried out while the author was a Ph.D. student at the University of Göttingen, supported by the DFG Research Training Group 2491 \lq Fourier Analysis and Spectral Theory\rq{}. I would like to thank my supervisor Jörg Brüdern for introducing me to the topic and for encouraging me to work on it.

\printbibliography

\end{document}